\renewcommand{\ge}{\geqslant}
\renewcommand{\le}{\leqslant}
\newcommand{\R}{{\mathbb R}}
\newcommand{\dif}{{\,d}}
\newcommand{\NZ}{{\mathcal F}}
\theoremstyle{plain}
\newtheorem{theorem}{Theorem}
\newtheorem{corollary}{Corollary}
\newtheorem{lemma}{Lemma}
\theoremstyle{definition}
\begin{document}
\bibliographystyle{plain}
\title{A harmonic sum over nontrivial zeros
of the Riemann zeta-function\footnote{Mathematics Subject Classifications:
11M26, 11Y60}
}
\author
{Richard P.\ Brent\footnote{Australian National University,
Canberra, Australia
{\tt <Hlimit@rpbrent.com>}},\; 
David J.\ Platt\footnote{School of Mathematics, University of Bristol,
Bristol, UK 
{\tt <dave.platt@bris.ac.uk>}}\;
and Timothy S.\ Trudgian\footnote{School of Science, Univ.\ of NSW,
Canberra, Australia 
{\tt <t.trudgian@adfa.edu.au>}}
}
\maketitle

\vspace*{-10pt}
\begin{abstract}
\vspace*{5pt}
\noindent
We consider the sum $\sum 1/\gamma$, where $\gamma$ ranges over
the ordinates of nontrivial zeros of the Riemann zeta-function
in an interval $(0,T]$, and consider the behaviour of the
sum as $T \to\infty$. We show that, after subtracting a smooth
approximation 
$\frac{1}{4\pi} \log^2(T/2\pi),$
the sum tends to a limit $H \approx -0.0171594$ 
which can be expressed as an integral. 
We calculate $H$ to high accuracy, using a method which has
error $O((\log T)/T^2)$.
Our results improve on earlier results by Hassani and other authors.
\end{abstract}

\section{Introduction}				\label{sec:Intro}

Let the nontrivial zeros of the Riemann zeta-function $\zeta(s)$ be
denoted by $\rho = \sigma + i\gamma$. In order of increasing height,
the ordinates of these zeros in the upper half-plane are
$\gamma_1 \approx 14.13  < \gamma_2 < \gamma_3 < \cdots$.
Define
\[
G(T) := \sum_{0<\gamma\le T}1/\gamma,
\]
where multiple zeros (if they exist) are weighted according to their
multiplicity.  We consider the behaviour of $G(T)$ as
$T \to \infty$. 
Answering a question of Hassani~\cite{Hassani-2016}, we show
in Theorem~\ref{thm:H} of \S\ref{sec:limit} that there exists
\begin{equation}			\label{eq:H}
H := 
 \lim_{T\to\infty}\left(G(T) - \frac{\log^2(T/2\pi)}{4\pi}\right).
\end{equation}

There is an analogy with the harmonic series $\sum 1/n$,
which 
appears in the usual definition of Euler's constant:
\[
C :=  \lim_{N \to \infty}\left(\sum_{n=1}^N \frac{1}{n} - \log N\right)
  = 0.577215\cdots.
\]
It is well-known that one can compute $C$ accurately using Euler--Maclaurin
summation or faster algorithms, see \cite{rpb256,rpb049,Demailly2017} 
and the references given there.
However, it is not so easy to compute $H$ accurately, because of the
irregular spacing of the nontrivial zeros of $\zeta(s)$,
for which see~\cite{Odlyzko1987}.

In \S\ref{sec:extrapolation} we consider numerical approximation of~$H$,
after giving some preliminary lemmas in~\S\ref{sec:lemmas}.
If the 
definition~\eqref{eq:H} is used directly with the zeros up to height~$T$,
then the error is $O((\log T)/T)$.
In Theorem~\ref{thm:accelerated} 
we show how to improve this, without much extra computation,
to $O((\log T)/T^2)$.
In Corollary \ref{cor:huck} we give an explicit bound on $H$
with error of order $10^{-18}$.

Finally, in \S\ref{sec:related}, we comment briefly on related
results in the literature.
\section{Existence of the limit}		\label{sec:limit}

Before proving Theorem~\ref{thm:H}, we define some notation.
Let $\NZ$ denote the set of positive ordinates of zeros of $\zeta(s)$.
Following Titchmarsh~\cite[\S9.2--\S9.3]{Titch},
if $0 < T\not\in\NZ$,
then we let $N(T)$ denote the
number of zeros $\beta+i\gamma$ of $\zeta(s)$ with
$0 < \gamma \le T$, and
$S(T)$ denote the value of \hbox{$\pi^{-1}\arg\zeta(\frac12+iT)$} obtained by
continuous variation along the straight lines joining
$2$, \hbox{$2+iT$}, and $\frac12+iT$, starting with the value $0$.
If $T\in\NZ$, we could take
$S(T) = \lim_{\delta\to 0}[S(T-\delta)+S(T+\delta)]/2$,
and similarly for $N(T)$, but we avoid this exceptional case.
Note that $N(T)$ and $S(T)$ are piecewise continuous, with jumps at
$T\in\NZ$.

By~\cite[Thm.~9.3]{Titch}, we have
$N(T) = L(T) + Q(T)$, where
\[
L(T) = \frac{T}{2\pi}\left(\log\left(\frac{T}{2\pi}\right)-1\right) +\frac78\,
\text{, and } Q(T) = S(T) + O(1/T).
\]
An explicit bound from Trudgian~\cite[Cor.~1]{Trudgian-2014} is
\begin{equation}			\label{eq:Q}
Q(T) = S(T) + \frac{0.2\vartheta}{T}\,
\end{equation}
where (here and elsewhere) $\vartheta\in\R$ satisfies $|\vartheta|\le 1$.

Let $S_1(T) := \int_0^T S(t)\,dt$. 
By \cite[Thm.~9.4 and Thm.~9.9(A)]{Titch}, we have
$S(T) = O(\log T)$ and
$S_1(T) = O(\log T)$, and it follows from~\eqref{eq:Q} that
\hbox{$Q(T) = O(\log T)$} also.

Explicit bounds on $S_1(T)$ are known.
For certain constants $c$, $A_0\ge 0$, $A_1\ge 0$, and $T_0 > 0$, 
there is a bound
\begin{equation}			\label{eq:S1bound}
|S_1(T)-c| \le A_0 + A_1\log T \text{ for all } T \ge T_0.
\end{equation}
{From}~\cite[Thm.~2.2]{Trudgian-2011}, we could
take $c=S_1(168\pi)$, $A_0=2.067$, $A_1=0.059$, and $T_0 = 168\pi$.
However, a small computation shows that~\eqref{eq:S1bound} also
holds for $T\in[2\pi,168\pi]$. Hence, we take $T_0 = 2\pi$
in~\eqref{eq:S1bound}.

Our first result is the following.

\begin{theorem}					\label{thm:H}
The limit $H$ in~\eqref{eq:H} exists.
Also,
\begin{equation*}			
H = \int_{2\pi}^\infty\frac{Q(t)}{t^2}\dif t - \frac{1}{16\pi}\,,
\end{equation*}
where $Q(T) = N(T)-L(T)$ is as above.
\end{theorem}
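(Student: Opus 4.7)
The plan is to express $G(T)$ via Riemann--Stieltjes partial summation, substitute $N=L+Q$, evaluate the $L$ part in closed form, and then show the remaining $Q$ contribution has a limit as $T\to\infty$.

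First, since $\gamma_1 \approx 14.13 > 2\pi$, we have $N(2\pi)=0$, so for $T\notin\NZ$ with $T>2\pi$,
\[
G(T) = \int_{2\pi}^{T} \frac{dN(t)}{t} = \frac{N(T)}{T} + \int_{2\pi}^{T} \frac{N(t)}{t^2}\dif t
\]
by Stieltjes integration by parts. Splitting $N=L+Q$ produces four terms. The $L$ contribution is routine: using $L(t) = (t/2\pi)(\log(t/2\pi)-1)+7/8$, an elementary computation (the substitution $u=\log(t/2\pi)$ handles the main piece) gives
\[
\frac{L(T)}{T} + \int_{2\pi}^{T}\frac{L(t)}{t^2}\dif t = \frac{\log^2(T/2\pi)}{4\pi} - \frac{1}{16\pi},
\]
where the linear and constant terms cancel cleanly thanks to the $7/8$ in $L$. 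Subtracting $\log^2(T/2\pi)/(4\pi)$ from $G(T)$ therefore reduces the problem to showing that
\[
\frac{Q(T)}{T} + \int_{2\pi}^{T}\frac{Q(t)}{t^2}\dif t
\]
tends to a limit, which must be $\int_{2\pi}^{\infty}Q(t)t^{-2}\dif t - 1/(16\pi)$.

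The boundary term $Q(T)/T$ vanishes in the limit, since $Q(T)=O(\log T)$. The main obstacle is establishing convergence of $\int_{2\pi}^{\infty} Q(t)/t^2 \dif t$: the integrand is not absolutely small, because $Q$ oscillates like $S$. Here I use \eqref{eq:Q} to write $Q=S+O(1/t)$. The $O(1/t)$ piece contributes an absolutely convergent $O(1/t^3)$ integrand, so the issue reduces to $\int_{2\pi}^T S(t)/t^2\dif t$. Integrating by parts with the antiderivative $S_1$ gives
\[
\int_{2\pi}^{T}\frac{S(t)}{t^2}\dif t = \frac{S_1(T)}{T^2} - \frac{S_1(2\pi)}{(2\pi)^2} + 2\int_{2\pi}^{T}\frac{S_1(t)}{t^3}\dif t.
\]
By \eqref{eq:S1bound}, $S_1(t) = O(\log t)$ for $t\ge 2\pi$, so the boundary term at $T$ tends to $0$ and the last integral converges absolutely. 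This proves the limit exists.

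Finally, I would tidy up the restriction $T\notin\NZ$ by noting that $G(T)$, $L(T)$, and $Q(T)/T$ are all right-continuous (or can be handled by one-sided limits) at points of $\NZ$, so the identity and the limit pass through the full sequence $T\to\infty$. Putting the pieces together yields $H = \int_{2\pi}^{\infty} Q(t)/t^2 \dif t - 1/(16\pi)$ as claimed.
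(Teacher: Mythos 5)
Your proposal is correct and follows essentially the same route as the paper: a Stieltjes partial summation of $G(T)$, the splitting $N=L+Q$, and the observation that $Q(T)=O(\log T)$ kills the boundary term and yields convergence; the only cosmetic difference is that you integrate by parts before splitting, so your $-1/(16\pi)$ emerges from the $L$-terms rather than from the boundary value $Q(2\pi)=\tfrac18$ as in the paper. One small remark: the extra integration by parts against $S_1$ is not needed for mere convergence, since $Q(t)=O(\log t)$ already gives absolute convergence of $\int_{2\pi}^{\infty}Q(t)t^{-2}\dif t$; the paper reserves that device for the explicit error bound in Lemma~\ref{lem:E2-bound}.
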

\begin{proof}
Suppose that $2\pi \le T\not\in\NZ$.
Using Stieltjes integrals, and noting that
$\gamma_1 > 2\pi$ and $Q(2\pi) = \frac18$, we have
\begin{align}
\nonumber
G(T) &= \sum_{0 < \gamma \le T}\frac{1}{\gamma} 
 = \int_{2\pi}^T \frac{\dif N(t)}{t}
 = \int_{2\pi}^T \frac{\dif L(t)}{t} + \int_{2\pi}^T \frac{\dif Q(t)}{t}\\
\nonumber
&= \frac{1}{2\pi}\int_{2\pi}^T\frac{\log(t/2\pi)}{t}\dif t
 + \left[\frac{Q(t)}{t} + \int\frac{Q(t)}{t^2}\dif t\right]_{2\pi}^T\\
							\label{eq:lem1}
&= \frac{\log^2(T/2\pi)}{4\pi} + \frac{Q(T)}{T}-\frac{1}{16\pi}
 + \int_{2\pi}^T\frac{Q(t)}{t^2}\dif t\,.
\end{align}
Thus
\[
G(T) - \frac{\log^2(T/2\pi)}{4\pi} = 
\int_{2\pi}^T\frac{Q(t)}{t^2}\dif t -\frac{1}{16\pi} + O((\log T)/T).
\]
Letting $T \to \infty$, the last integral converges, so the limit of the
left-hand-side exists, and we obtain
\[
H = \lim_{T\to\infty} \left(G(T) - \frac{\log^2(T/2\pi)}{4\pi}\right)
  = \int_{2\pi}^\infty\frac{Q(t)}{t^2}\dif t - \frac{1}{16\pi}\,.
\]
This completes the proof.
\end{proof}

\section{Two lemmas}					\label{sec:lemmas}

We now give two lemmas 
that are used in the proof of Theorem~\ref{thm:accelerated}.

\begin{lemma}					\label{lem:Q-integral}
If $2\pi\le T\not\in\NZ$, 
then
\[
\int_{2\pi}^T\frac{Q(t)}{t^2}\dif t
= G(T) - \frac{Q(T)}{T} + \frac{1}{16\pi} - \frac{\log^2(T/2\pi)}{4\pi}\,.
\]
\end{lemma}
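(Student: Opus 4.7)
The plan is to recognize this lemma as a direct rearrangement of equation~\eqref{eq:lem1}, which was already derived en route to Theorem~\ref{thm:H}. That equation asserts
\[
G(T) = \frac{\log^2(T/2\pi)}{4\pi} + \frac{Q(T)}{T}-\frac{1}{16\pi}
 + \int_{2\pi}^T\frac{Q(t)}{t^2}\dif t
\]
for $2\pi\le T\notin\NZ$; isolating the integral on the left yields precisely the claimed identity. So the cleanest proof is just to cite~\eqref{eq:lem1} and solve for the integral.

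For a self-contained argument I would retrace the same steps. First, write $G(T) = \int_{2\pi}^T t^{-1}\dif N(t)$ as a Stieltjes integral, which is valid because $\gamma_1 > 2\pi$. Next, split via $N = L+Q$: evaluate $\int_{2\pi}^T t^{-1}\dif L(t) = \frac{1}{2\pi}\int_{2\pi}^T \log(t/2\pi)\,\dif t/t = \log^2(T/2\pi)/(4\pi)$ using the explicit form of $\dif L$, and apply Stieltjes integration by parts to the $Q$-integral to obtain
\[
\int_{2\pi}^T \frac{\dif Q(t)}{t} = \frac{Q(T)}{T} - \frac{Q(2\pi)}{2\pi} + \int_{2\pi}^T\frac{Q(t)}{t^2}\dif t.
\]
Substituting $Q(2\pi)=\tfrac18$ (as used in the proof of Theorem~\ref{thm:H}, and consistent with $N(2\pi)=0$ together with $L(2\pi)=-\tfrac18$) turns the boundary contribution into $-1/(16\pi)$. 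Adding the two pieces and rearranging produces the lemma.

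There is essentially no obstacle: the hypothesis $T\notin\NZ$ is needed only so that $Q$ is continuous at the upper endpoint and the term $Q(T)/T$ is unambiguous; all other manipulations are routine. The main content of the lemma is packaging, in the form most convenient for \S\ref{sec:extrapolation}, an identity that already appears inside the proof of Theorem~\ref{thm:H}.
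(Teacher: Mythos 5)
Your proposal is correct and matches the paper exactly: the paper's proof is literally ``This is just a rearrangement of~\eqref{eq:lem1} in the proof of Theorem~\ref{thm:H}.'' Your additional self-contained retracing (including the check $Q(2\pi)=N(2\pi)-L(2\pi)=0-(-\tfrac18)=\tfrac18$, giving the boundary term $-1/(16\pi)$) is accurate and consistent with the derivation of~\eqref{eq:lem1}.
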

\begin{proof}
This is just a rearrangement of~\eqref{eq:lem1}
in the proof of Theorem~\ref{thm:H}.
\end{proof}

\begin{lemma}				\label{lem:E2-bound}
If $T \ge 2\pi$ and 
\begin{equation}			\label{eq:E2}
E_2(T) := \int_{T}^\infty \frac{Q(t)}{t^2}\dif t,
\end{equation}
then
\[
|E_2(T)| \le \frac{4.27 + 0.12\log T}{T^2}\,.
\]
\end{lemma}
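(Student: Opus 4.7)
The plan is to bound $E_2(T)$ by first using~\eqref{eq:Q} to replace $Q$ by $S$, and then applying integration by parts to convert $\int_T^\infty S(t)/t^2\,\dif t$ into an expression involving $S_1$, so that the explicit bound~\eqref{eq:S1bound} can be used. Concretely, I would write
\[
E_2(T) = \int_T^\infty \frac{S(t)}{t^2}\dif t + \int_T^\infty \frac{0.2\vartheta(t)}{t^3}\dif t,
\]
and bound the second integral trivially by $0.2\int_T^\infty t^{-3}\dif t = 0.1/T^2$.

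For the first integral, note that $S_1$ is continuous with $S_1' = S$ almost everywhere and $S_1(t)=O(\log t)$, so $S_1(t)/t^2\to 0$. Integration by parts then gives
\[
\int_T^\infty \frac{S(t)}{t^2}\dif t = -\frac{S_1(T)}{T^2} + 2\int_T^\infty \frac{S_1(t)}{t^3}\dif t = \frac{c - S_1(T)}{T^2} + 2\int_T^\infty \frac{S_1(t)-c}{t^3}\dif t,
\]
where in the second equality I have subtracted off the known constant $c$ from~\eqref{eq:S1bound} and used $2c\int_T^\infty t^{-3}\dif t = c/T^2$. The first summand is bounded in absolute value by $(A_0 + A_1\log T)/T^2$ directly from~\eqref{eq:S1bound}. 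For the tail I would apply~\eqref{eq:S1bound} pointwise (valid since $T\ge T_0 = 2\pi$) and use the elementary evaluations
\[
2\int_T^\infty \frac{1}{t^3}\dif t = \frac{1}{T^2}, \qquad
2\int_T^\infty \frac{\log t}{t^3}\dif t = \frac{\log T + 1/2}{T^2}
\]
(the second by one more integration by parts) to obtain a bound of $(A_0 + A_1/2 + A_1\log T)/T^2$ on the tail.

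Combining the three contributions yields
\[
|E_2(T)| \le \frac{2A_0 + A_1/2 + 0.1 + 2A_1\log T}{T^2},
\]
and substituting $A_0 = 2.067$, $A_1 = 0.059$ gives constants $4.2635$ and $0.118$, both safely below the claimed $4.27$ and $0.12$. The only genuinely delicate point is the integration by parts, since $S$ has jump discontinuities at the ordinates in $\NZ$; however $S_1$ is the absolutely continuous antiderivative of $S$, so this is just the ordinary integration-by-parts formula applied to the differentiable (a.e.) function $S_1$, with the boundary term at $\infty$ vanishing by the $O(\log t)$ growth of $S_1$. I expect the rest to be routine constant-tracking.
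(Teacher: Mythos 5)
Your proposal is correct and follows essentially the same route as the paper: split off the $0.2\vartheta/t$ term from~\eqref{eq:Q} to get the $0.1/T^2$ contribution, integrate $\int_T^\infty S(t)t^{-2}\dif t$ by parts against $S_1(t)-c$, and apply~\eqref{eq:S1bound} to both the boundary term and the tail integral, arriving at the same bound $(2A_0+0.5A_1+0.1+2A_1\log T)/T^2 = (4.2635+0.118\log T)/T^2$. The constant-tracking and the justification of the integration by parts (via the absolutely continuous antiderivative $S_1$ and the vanishing boundary term at infinity) are all in order.
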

\begin{proof}
To bound $E_2(T)$ we note that, from~\eqref{eq:Q},
\begin{equation}				\label{eq:alpha}
\int_T^\infty\frac{Q(t)}{t^2}\,dt
= \int_T^\infty\frac{S(t)}{t^2}\,dt
  + \frac{0.1\vartheta}{T^2}\,.
\end{equation}
Also, using integration by parts, 
\begin{equation}				\label{eq:parts}
\int_T^\infty \frac{S(t)}{t^2}\,dt
 = -\frac{S_1(T)-c}{T^2} + 2\int_T^\infty\frac{S_1(t)-c}{t^3}\,dt\,.
\end{equation}
Using~\eqref{eq:S1bound}, we have
\begin{align}
\nonumber
\left|\int_T^\infty \frac{S(t)}{t^2}\,dt\right|
&\le \frac{|S_1(T)-c|}{T^2} + 2\int_T^\infty \frac{|S_1(t)-c|}{t^3}\,dt\\
\nonumber
&\le \frac{A_0+A_1\log T}{T^2}
 + 2\int_T^\infty \frac{A_0+A_1\log t}{t^3}\,dt\\
						\label{eq:beta}
&= \frac{2A_0+0.5A_1+2A_1\log T}{T^2}\,.
\end{align}
Using \eqref{eq:alpha}, this gives
\[
|E_2(T)| \le \frac{2A_0+0.5A_1+0.1+2A_1\log T}{T^2}\,. 
\]
Inserting the values $A_0=2.067$ and $A_1=0.059$ gives
the result.
\end{proof}

We note that the bound \eqref{eq:beta} might
be improved by using a result of Fujii~\cite[Thm.~2]{Fujii} to bound 
the integral
of $S_1(t)/t^3$ in~\eqref{eq:parts}, although we are not aware of any
explicit version of Fujii's estimate.
The bound would then be dominated by the term $-S_1(T)/T^2$
in \eqref{eq:parts}.  This term is $o((\log T)/T^2)$ iff the
Lindel\"of Hypothesis (LH) is true,
see~\cite[Thm~13.6(B) and Note 13.8]{Titch}.
Thus, obtaining an order-of-magnitude improvement in the bound
on $E_2(T)$ is equivalent to proving LH. 
\section{Numerical approximation of $H$}	\label{sec:extrapolation}

We consider two methods to approximate $H$ numerically. The first
method truncates the sum and integral in the definition~\eqref{eq:H}
at height $T\ge 2\pi e$, giving an approximation with error
$E(T) = O((\log T)/T)$. An explicit bound
\begin{equation}				\label{eq:simple-approx}
H = G(T) - \frac{\log^2(T/2\pi)}{4\pi} + 
 A\vartheta\left(\frac{2\log T+1}{T}\right)
\end{equation}
follows from Lehman~\cite[Lem.~1]{Lehman}.
Lehman gave $A=2$, but from~\cite[Cor.~1]{BPTCv7} we may take $A = 0.28$.
Thus, we can obtain about $5$ decimal places by summing over
the first $10^6$ zeros of $\zeta(s)$, i.e. to height
$T = 600270$.  In this manner we find $H \approx -0.01716$.
It is difficult to obtain many more correct digits
because of the slow convergence.
However, the result is sufficient to show
that $H$ is negative, 
which is significant in the proof of~\cite[Lem.~8]{BPTCv7}.

Convergence can be accelerated using Theorem~\ref{thm:accelerated},
which improves the error bound $E(T) = O((\log T)/T)$ 
of \eqref{eq:simple-approx}
to $E_2(T) = O((\log T)/T^2)$. 
Note that the error term $E_2(T)$
is a continuous function of~$T$. 
This is unlike $E(T)$, which has jumps for $T\in\NZ$.

\begin{theorem}				\label{thm:accelerated}
For all $T \ge 2\pi$,
\begin{equation}			\label{eq:Hsmooth}
H = \sum_{0<\gamma\le T}\left(\frac{1}{\gamma}-\frac{1}{T}\right)
    -\frac{\log^2(T/2\pi e)+1}{4\pi} 
    + \frac{7}{8T} + E_2(T)\,,
\end{equation}
where $E_2(T)$ is as in~\eqref{eq:E2}, and
$|E_2(T)| \le (4.27 + 0.12\log T)/T^2$.
\end{theorem}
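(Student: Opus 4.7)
The bound on $E_2(T)$ is exactly Lemma~\ref{lem:E2-bound}, so the work is in establishing the identity~\eqref{eq:Hsmooth}. My plan is to start from the integral representation of $H$ in Theorem~\ref{thm:H}, split the integral at height $T$, and massage the finite part using Lemma~\ref{lem:Q-integral}.

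First I would write, for $T \ge 2\pi$ with $T \notin \NZ$,
\[
H = \int_{2\pi}^T \frac{Q(t)}{t^2}\dif t + E_2(T) - \frac{1}{16\pi}\,,
\]
and substitute Lemma~\ref{lem:Q-integral} to obtain
\[
H = G(T) - \frac{Q(T)}{T} - \frac{\log^2(T/2\pi)}{4\pi} + E_2(T)\,.
\]
The $\pm 1/(16\pi)$ terms cancel cleanly.

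Next, to convert $G(T) - Q(T)/T$ into the shape appearing in~\eqref{eq:Hsmooth}, I would use $N(T) = L(T) + Q(T)$ to write $-Q(T)/T = L(T)/T - N(T)/T$, and note the trivial identity
\[
G(T) - \frac{N(T)}{T} = \sum_{0<\gamma\le T}\left(\frac{1}{\gamma}-\frac{1}{T}\right).
\]
Plugging in $L(T)/T = (\log(T/2\pi)-1)/(2\pi) + 7/(8T)$ then yields
\[
H = \sum_{0<\gamma\le T}\!\left(\frac{1}{\gamma}-\frac{1}{T}\right) + \frac{\log(T/2\pi)-1}{2\pi} - \frac{\log^2(T/2\pi)}{4\pi} + \frac{7}{8T} + E_2(T)\,.
\]

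Finally, setting $x := \log(T/2\pi)$ and completing the square gives
\[
\frac{x-1}{2\pi} - \frac{x^2}{4\pi} = -\frac{(x-1)^2 + 1}{4\pi} = -\frac{\log^2(T/2\pi e) + 1}{4\pi},
\]
which matches the stated form. The restriction $T \notin \NZ$ can then be removed by continuity of both sides as functions of $T$: the jumps of $G(T)$ at $\gamma \in \NZ$ are exactly cancelled by the jumps in the $-N(T)/T$ term hidden inside the reorganised sum, while $E_2(T)$ is already continuous. No step is a serious obstacle here — the argument is essentially a bookkeeping exercise, and the only place one needs to be a little careful is the algebraic identification of the quadratic in $\log(T/2\pi)$ with $\log^2(T/2\pi e) + 1$.
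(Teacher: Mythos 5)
Your proposal is correct and follows essentially the same route as the paper: both start from Theorem~\ref{thm:H}, invoke Lemma~\ref{lem:Q-integral} to reduce to $H = G(T) - Q(T)/T - \log^2(T/2\pi)/(4\pi) + E_2(T)$, substitute $Q(T) = N(T) - L(T)$, and finish with the same algebraic simplification and continuity argument for $T \in \NZ$. The only difference is that you spell out the bookkeeping (the completion of the square and the cancellation of jumps) that the paper compresses into ``simplification'' and ``a continuity argument.''
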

\begin{proof}
First assume that $T\not\in\NZ$.
{From} Theorem~\ref{thm:H} and Lemma~\ref{lem:Q-integral},
\[
H = G(T) - \frac{Q(T)}{T} - \frac{\log^2(T/2\pi)}{4\pi} + E_2(T),
\]
but $Q(T) = N(T) - L(T)$, so
\[
H = \sum_{0<\gamma\le T}\left(\frac{1}{\gamma}-\frac{1}{T}\right)
  + \frac{\log(T/2\pi)-1}{2\pi} + \frac{7}{8T}
  - \frac{\log^2(T/2\pi)}{4\pi} + E_2(T).
\]
Simplification gives~\eqref{eq:Hsmooth},
and a continuity argument shows that~\eqref{eq:Hsmooth}
holds if $T\in\NZ$.
Finally, the bound on $E_2(T)$ follows from Lemma~\ref{lem:E2-bound}.
\end{proof}

\begin{corollary}\label{cor:huck}
Let $H$ be defined by \eqref{eq:H}. We have
\begin{equation*}
H = 
-0.0171594043070981495 + \vartheta(10^{-18}).		\label{eq:Happrox}
\end{equation*}
\end{corollary}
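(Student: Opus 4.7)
The plan is to apply Theorem~\ref{thm:accelerated} at a single height $T$ chosen large enough that the analytic error bound on $E_2(T)$ from Lemma~\ref{lem:E2-bound} is comfortably below $10^{-18}$, and then to evaluate the finite right-hand side of~\eqref{eq:Hsmooth} in rigorous interval arithmetic using a verified table of nontrivial zeros of $\zeta(s)$.

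First, I would fix $T$. Solving $(4.27+0.12\log T)/T^2 \le \tfrac12\cdot 10^{-18}$ points to $T$ of order $10^{10}$: at $T=10^{10}$ the bound from Lemma~\ref{lem:E2-bound} is already below $10^{-19}$, leaving ample budget for rounding in the remainder of the computation. The ordinates $\gamma\le T$, of which there are of order $L(T)\sim 3\cdot 10^{10}$, are available to high precision from Platt's rigorous verification of RH up to such heights, which also certifies that every such $\gamma$ is simple and lies on the critical line, so the sum on the left of~\eqref{eq:Hsmooth} is unambiguously defined. I would then form the sum $\sum_{0<\gamma\le T}(1/\gamma-1/T)$ term by term in interval arithmetic; subtracting $1/T$ inside each term, as built into~\eqref{eq:Hsmooth}, reduces the individual summands from size $O(1/\gamma)$ to $O(1/(\gamma T))$, so the cancellation against the smooth expression $(\log^2(T/2\pi e)+1)/(4\pi) - 7/(8T)$ happens inside the summation and avoids catastrophic loss of precision.

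Having obtained an interval enclosure of the whole finite expression in~\eqref{eq:Hsmooth}, I would add the symmetric interval $[-1,1]\cdot(4.27+0.12\log T)/T^2$ supplied by Lemma~\ref{lem:E2-bound} and read off the midpoint and radius, yielding the stated numerical value of $H$. The main obstacle is computational rather than conceptual: carrying out of order $10^{10}$ high-precision reciprocal-and-subtract operations while keeping the certified error below $10^{-18}$ requires working at roughly $25$--$30$ decimal digits and using a summation scheme (for example, block summation) that keeps the accumulated rounding error a small multiple of the unit roundoff. The infrastructure already developed in~\cite{BPTCv7} to sum analogous quantities over the same database of zeros makes this a large but routine calculation.
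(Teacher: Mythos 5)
Your proposal matches the paper's proof, which likewise applies Theorem~\ref{thm:accelerated} at a single large height (the paper takes $T=\gamma_n$ with $n=10^{10}$, so $T\approx 3.29\times 10^9$, where Lemma~\ref{lem:E2-bound} already gives $|E_2(T)|<10^{-18}$) and evaluates the finite expression in~\eqref{eq:Hsmooth} by a rigorous interval-arithmetic computation over a verified database of zeros. One minor quibble that does not affect correctness: $1/\gamma-1/T=(T-\gamma)/(\gamma T)$ is of size $1/\gamma$ for small $\gamma$, not $O(1/(\gamma T))$, so the subtraction of $1/T$ is not what controls the conditioning of the sum.
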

\begin{proof}
This follows from Theorem~\ref{thm:accelerated}
by an interval-arithmetic computation using the first $n = 10^{10}$ zeros, 
with $T = \gamma_n \approx 3293531632.4\,$. 
\end{proof}

To illustrate Theorem~\ref{thm:accelerated}, we give some
numerical results in Table~\ref{tab:Hest}. 
The first column ($n$) gives the number of zeros used, and
the second column is the estimate of $H$ obtained from~\eqref{eq:Hsmooth},
using $T=\gamma_n$.
The first incorrect digit of each entry is underlined.
\begin{table}[h]
\begin{center}
\begin{tabular}{ | c | c | } \hline
  $n$ & $H$ estimate \\ \hline
      10 & $-0.017\underbar{3}72393877$\\
     100 & $-0.017159\underbar{7}65533$\\
    1000 & $-0.017159\underbar{6}03500$\\
   10000 & $-0.017159404\underbar{8}75$\\
  100000 & $-0.017159404\underbar{2}44$\\
 1000000 & $-0.017159404307$\\
\hline
\end{tabular}
\end{center}
\vspace*{-10pt}
\caption{Numerical estimation of $H$ using Theorem~\ref{thm:accelerated}.}
\vspace*{0pt}
\label{tab:Hest}
\end{table}

\section{Related results in the literature}		\label{sec:related}

B\"uthe~\cite[Lem.~3]{Buthe-2016} gives the inequality
\begin{equation}					\label{eq:Gineq}
G(T) \le \frac{\log^2(T/2\pi)}{4\pi}
 \text{ for } T \ge 5000.
\end{equation}
In~\cite[Lem.~8]{BPTCv7}, we give a different proof of 
\eqref{eq:Gineq},
and show that it holds for $T \ge 4\pi e$.

Hassani~\cite{Hassani-2016} shows (in our notation) that
\[
G(T) = \frac{\log^2(T/2\pi)}{4\pi} + O(1),
\]
and gives numerical bounds for the ``$O(1)$'' term.
A similar bound is given in~\cite[Lem.~2.10]{Saouter}.
Hassani does not prove existence of the limit~\eqref{eq:H},
but asks (see \cite[p.~114]{Hassani-2016}) whether it exists.
We have answered this in our Theorem~\ref{thm:H}.

In fact, Hassani works with
\[
\Delta_N := \sum_{n=1}^N \frac{1}{\gamma_n}
 - \left(\frac{1}{4\pi}\log^2\gamma_N -
   \frac{\log(2\pi)}{2\pi}\log\gamma_N\right),
\]
so in our notation
\[
\Delta_N = G(\gamma_N) -\frac{\log^2(\gamma_N/2\pi)}{4\pi}
 + \frac{\log^2(2\pi)}{4\pi}\,.
\]
Thus, the (hypothetical) limit to which Hassani refers is,
in our notation,
\[
H + \frac{\log^2(2\pi)}{4\pi} = 0.2516367513127059665 + \vartheta(10^{-18}).
\]
This is consistent with the value $0.25163$ that Hassani gives
based on his calculations using $2\cdot 10^6$ nontrivial
zeros. Hassani also uses an averaging technique to obtain values
in the range $[0.2516372,0.2516375]$, but apparently decreasing, without
an obvious limit. The acceleration technique of
Theorem~\ref{thm:accelerated} is more effective, and has the virtue of
giving a rigorous error bound.

\subsection*{Acknowledgements}
TST is supported by ARC Grants DP160100932 and FT160100094;
DJP is supported by ARC Grant DP160100932 and EPSRC Grant EP/K034383/1.

\pagebreak[3]

\end{document}